\documentclass{amsart}
\frenchspacing
%\usepackage{draftwatermark}
%\SetWatermarkText{Very preliminary version. Beware of fairy tales and errors}
%\SetWatermarkScale{.25}
\usepackage{amssymb, mathrsfs, bbm}
\usepackage[T1]{fontenc}
\usepackage[sc, osf]{mathpazo}
%\linespread{1.05}
\selectfont
\usepackage[protrusion=true, expansion=true]{microtype}

\usepackage[all, arc]{xy}
\SelectTips{eu}{}
\entrymodifiers={+!!<0pt,\fontdimen22\textfont2>}

\usepackage[pdftex, colorlinks=true, linkcolor=blue, citecolor=blue, linktocpage]{hyperref}

\makeatletter
\def\tagform@#1{\maketag@@@{\bfseries(\ignorespaces#1\unskip\@@italiccorr)}}
\renewcommand{\eqref}[1]{\textup{{\normalfont(\ref{#1}}\normalfont)}}
\makeatother

%\numberwithin{equation}{section}
%\swapnumbers
\theoremstyle{plain}
\newtheorem{thm}[equation]{Theorem}
\newtheorem{lemma}[equation]{Lemma}

\newtheorem{cor}[equation]{Corollary}

\theoremstyle{definition}

\theoremstyle{remark}

\theoremstyle{definition}

\def\11{\mathbf{1}}

\def\CC{\mathbf{C}}

\def\QQ{\mathbf{Q}}

\def\ZZ{\mathbf{Z}}

\def\gr{\mathrm{gr}}

\def\pt{\mathrm{Spec}(\CC)}

\newcommand{\mapright}[1]{\xrightarrow{#1}}

\title{Chow groups and equivariant geometry}
\author{Rahbar Virk}
\email{rsvirk@gmail.com}
\address{The Appalachians}
\begin{document}
\maketitle
%\setcounter{tocdepth}{1}
%\tableofcontents
\renewcommand{\thesubsection}{\textbf{\arabic{section}.\arabic{subsection}}}
%\renewcommand{\thesubsection}{\arabic{subsection}}
%{\let\thefootnote\relax\footnotetext{
%In a perfect world, a more appropriate title for this note would have been ``On the unreasonable effectiveness of the Eilenberg-Moore spectral sequence". Unfortunately, due to technical nuisances involved with using the Eilenberg-Moore spectral sequence in algebro-geometric contexts, it only makes an appearance in highly disguised form. See the proof of Lemma \ref{tatecor}, \S\ref{s:formality}, and the proof of Theorem \ref{algcycles}.
%}}
\subsection*{Introduction}
Throughout, `variety' will mean `separated scheme of finite type over $\mathrm{Spec}(\CC)$'. 
%Further, `$\pt$' will be used as a notational substitute for `$\Spec(\CC)$'.
%All algebraic groups making an appearance are tacitly linear algebraic. 
A $G$-variety will mean a variety endowed with the action of a linear algebraic group $G$.
The existence of functorial mixed Hodge structures on the rational cohomology, Borel-Moore homology, equivariant cohomology, etc., of a variety will be freely used (see \cite{D}). 
For homology (Borel-Moore, equivariant, etc.), $H_*$ will be called \emph{pure} if each $H_i$ is a pure Hodge structure of weight $-i$.

%Given a variety $X$, write $H^{BM}_*(X;\QQ)$ for its Borel-Moore homology with rational coefficients. Let $CH_i(X)$ denote the Chow group of cycles of dimension $i$, modulo rational equivalence. Set
%\[ CH_*(X)_{\QQ} = CH_*(X)\otimes \QQ.\]
%The following is the main result of this note.
\begin{thm}\label{algcycles}Let $X$ be a variety on which a linear algebraic group acts with finitely many orbits. If the (Borel-Moore) homology $H_*(X;\QQ)$ is pure (for instance, if $X$ is rationally smooth and complete), then the cycle class map
\[ CH_*(X)_{\QQ} \mapright{\sim} H_{*}(X; \QQ), \]
from rational Chow groups to homology, is a degree doubling isomorphism. 
\end{thm}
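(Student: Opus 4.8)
The plan is to compare, along the stratification of $X$ by $G$-orbits, the localization sequences for Chow groups and for Borel--Moore homology, keeping track of Hodge weights. The starting point is that for any variety $H_i(-;\QQ)$ (Borel--Moore) carries a mixed Hodge structure with weights $\geq -i$, dual to Deligne's bound $\leq i$ for $H^i_c$ (see \cite{D}), whereas the class of an algebraic $k$-cycle is pure of type $(-k,-k)$. Hence the cycle class map factors through the bottom weight space,
\[ CH_k(X)_\QQ \to W_{-2k}H_{2k}(X;\QQ) = \mathrm{Gr}^W_{-2k}H_{2k}(X;\QQ), \]
and I will prove two things: (a) for \emph{every} variety with finitely many orbits, all weights occurring in $H_*$ are even (``mixed Tate'') and the displayed refined cycle map is an isomorphism; (b) the purity hypothesis promotes (a) to the assertion of the theorem.

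Toward (a) I first reduce to $G$ connected (replacing $G$ by its identity component only subdivides each orbit into finitely many pieces, harmless over $\QQ$), and then use that a connected linear algebraic group, hence each orbit $G/H$, admits a paving by locally closed cells isomorphic to $\AA^a\times(\GG_m)^b$ --- Bruhat decomposition in the reductive case, combined with the Levi decomposition in general. Assembling the orbit pavings, $X$ itself is paved by such cells, and I induct on the number of cells. For a single cell a direct computation (Künneth for $\AA^a\times(\GG_m)^b$) gives mixed Tateness and the isomorphism. For the inductive step, write $X=Z\sqcup U$ with $Z$ closed (a union of cells) and $U$ a single open cell, and apply the exact functor $\mathrm{Gr}^W_{-2k}$ to the Borel--Moore localization sequence. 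The connecting map out of degree $2k$ lands in $\mathrm{Gr}^W_{-2k}H_{2k-1}(Z)$, which \emph{vanishes automatically} because $H_{2k-1}(Z)$ has weights $\geq-(2k-1)>-2k$; so one is left with the four-term exact sequence
\[ \mathrm{Gr}^W_{-2k}H_{2k+1}(U)\xrightarrow{\partial}\mathrm{Gr}^W_{-2k}H_{2k}(Z)\to\mathrm{Gr}^W_{-2k}H_{2k}(X)\to\mathrm{Gr}^W_{-2k}H_{2k}(U)\to0. \]

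Comparing this with the right-exact Chow sequence $CH_k(Z)_\QQ\to CH_k(X)_\QQ\to CH_k(U)_\QQ\to0$ via the cycle map, \emph{surjectivity} of the refined cycle map for $X$ follows formally from the inductive surjectivity for $Z$ and $U$ together with the fact that $\mathrm{im}(\mathrm{Gr}^W_{-2k}H_{2k}(Z)\to\mathrm{Gr}^W_{-2k}H_{2k}(X))$ is spanned by algebraic classes. The essential difficulty is \emph{injectivity}: one must show that the image of the homological connecting map $\partial$ equals the cycle classes of the kernel of the proper pushforward $CH_k(Z)_\QQ\to CH_k(X)_\QQ$, i.e. that every bottom-weight class of $Z$ that dies in $X$ is represented by an algebraic cycle bounding in $X$. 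This is the only step that uses geometry beyond the weight formalism; I would establish it exactly as in the theory of linear varieties, using that the strata are homogeneous so that the connecting maps are computed from the cells meeting $\overline{U}\setminus U$.

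Finally, (b). If $H_*(X;\QQ)$ is pure then $H_{2k}(X)$ is pure of weight $-2k$, so $\mathrm{Gr}^W_{-2k}H_{2k}(X)=H_{2k}(X)$ and the isomorphism of (a) becomes $CH_k(X)_\QQ\xrightarrow{\sim}H_{2k}(X;\QQ)$; moreover $H_{2k+1}(X)$, being simultaneously pure of odd weight $-(2k+1)$ and mixed Tate (hence of even weights), must vanish. This yields the degree-doubling isomorphism $CH_*(X)_\QQ\xrightarrow{\sim}H_*(X;\QQ)$. The parenthetical case is automatic: for $X$ complete and rationally smooth, Poincaré duality identifies $H_i(X;\QQ)$ with $H^{2\dim X-i}(X;\QQ)$ up to a Tate twist, and the cohomology of a complete rationally smooth variety is pure, so the hypothesis holds.
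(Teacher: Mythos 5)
Your overall strategy --- stratify by orbits, compare the Chow and Borel--Moore localization sequences on the bottom weight graded piece, and then use purity to upgrade --- is a reasonable non-equivariant route in the spirit of \cite{FMSS} and \cite{To}, and your part (b) (purity kills odd homology and identifies $W_{-2k}H_{2k}$ with $H_{2k}$, plus the Poincar\'e duality remark for the rationally smooth complete case) is fine. But there is a genuine gap at the base case, and it sits exactly at the point that distinguishes this theorem from the solvable-group case of \cite{FMSS}. You assert that ``a connected linear algebraic group, hence each orbit $G/H$, admits a paving by locally closed cells isomorphic to $\AA^a\times(\GG_m)^b$.'' Bruhat plus Levi does pave $G$ itself, and $G/P$ for $P$ parabolic, and Rosenlicht handles $B/H$ for $B$ solvable (whence every orbit of a \emph{spherical} homogeneous space, via its finitely many $B$-orbits). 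But a paving of $G$ does not descend along the quotient $G\to G/H$ for an arbitrary closed subgroup $H$ (the map is an $H$-torsor, in general not Zariski-locally trivial, and images of cells are not cells), and there is no known cell paving of a general $G/H$ --- for instance the orbits occurring in a nilpotent cone, $G/Z_G(e)$ for $e$ regular nilpotent in $\fsl_3$, are not spherical and your construction produces nothing for them. The word ``hence'' is carrying the entire content of the extension from solvable to arbitrary $G$. Your second deferral (``I would establish [the injectivity/connecting-map step] exactly as in the theory of linear varieties'') would indeed close up \emph{if} the strata were cells, so the paving claim is the one load-bearing gap.

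The paper circumvents precisely this obstacle by working equivariantly: the base case is that $G/K$ satisfies the ``strong property'' as a $G$-variety, which by the identifications $A^G_*(G/K)\cong A^K_*(\pt)$ and $H^G_*(G/K)\cong H^K_*(\pt)$ is really a statement about the Chow ring versus the cohomology of the classifying space $BK$ (Totaro, \cite[Theorem 2.14]{ToB}) --- provable by reduction to tori, with no paving of $G/K$ itself required. The induction over orbits is then run for equivariant motivic homology versus the weight filtration on equivariant Borel--Moore homology (the ``strong property'' for the open stratum supplying exactly the odd-degree surjectivity you need to control the connecting map), and the non-equivariant statement is recovered at the end via Brion's isomorphism $\QQ\otimes_{A^*_G}A^G_*(X)_\QQ\cong CH_*(X)_\QQ$ together with the corresponding degeneration on the topological side. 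If you want to salvage your outline, you should either restrict to the spherical/solvable setting (recovering \cite{FMSS}) or replace your base case with the equivariant one.
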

%For a smooth projective $G$-variety $X$, admitting finitely many orbits, the surjection $CH_i(X)_{\QQ} \twoheadrightarrow H_{2i}^{BM}(X; \QQ)$ is as predicted by the Hodge conjecture (see Lemma \ref{tatecor}).
This extends a result of Fulton-MacPherson-Sottile-Sturmfels \cite{FMSS} from solvable groups to arbitrary linear algebraic groups. The price paid is that while most of the results of \cite{FMSS} hold integrally, we deal exclusively with rational coefficients.
Our arguments are quite different from those in \cite{FMSS}. In particular, Theorem \ref{algcycles} is deduced from statements in the equivariant context.

For a $G$-variety $X$, write $A^G_*(X)_{\QQ}$ for its rational equivariant Chow groups.
%Set
%\[ A^G_*(X)_{\QQ} = A^G_*(X) \otimes \QQ.\]
Let $H_*^{G}(X;\QQ)$ denote the $G$-equivariant (Borel-Moore) homology of $X$, and let $W_{\bullet}$ be the weight filtration on $H_*^{G}(X;\QQ)$.
\begin{thm}\label{lowest}Let $G$ be a linear algebraic group acting on a variety $X$. Assume $X$ admits finitely many orbits. Then the cycle class map
\[ A^G_i(X)_{\QQ} \mapright{\sim} W_{-2i}H_{2i}^{G}(X; \QQ)\]
is an isomorphism for each $i\in \ZZ$.
\end{thm}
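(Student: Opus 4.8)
The plan is to prove Theorem~\ref{lowest} by reducing to the case of an orbit via the equivariant localization/stratification machinery, and then to identify both sides explicitly on a single orbit.

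The plan is to prove Theorem~\ref{lowest} by induction on the number of orbits, reducing to the case of a single orbit, where both sides are computed by the equivariant invariants of the stabilizer. First I would set up two compatible localization sequences. Choosing a closed orbit $O \subseteq X$ (which exists as there are finitely many orbits) with open complement $U = X \setminus O$, functoriality of the cycle class map produces a commutative ladder whose top row is the equivariant Chow localization sequence of Edidin--Graham,
\[ A^G_i(O)_\QQ \mapright{f} A^G_i(X)_\QQ \mapright{g} A^G_i(U)_\QQ \to 0, \]
which is only right exact, and whose bottom row is the long exact sequence of mixed Hodge structures in equivariant Borel--Moore homology attached to $O \hookrightarrow X \hookleftarrow U$; write $a,b,c$ for the three vertical cycle class maps and $f',g'$ for the lower horizontal maps. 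For the base case $X = O = G/H$ I would reduce the cycle class map to the comparison $A^H_*(\pt)_\QQ \to H^H_*(\pt;\QQ)$ between the equivariant Chow groups and equivariant Borel--Moore homology of a point. Killing the unipotent radical by rational homotopy invariance and passing to a maximal torus with its Weyl group identifies both with the $W$-invariants in $\mathrm{Sym}(\hat T)_\QQ$, so the map is an isomorphism; moreover $H^H_*(\pt;\QQ)$ is pure and concentrated in even degrees.

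Next I would carry out the weight bookkeeping. Equivariant Borel--Moore homology satisfies the bound that $H^G_k$ has weights $\geq -k$, so $W_{-2i}H^G_{2i} = \gr^W_{-2i}H^G_{2i}$ is literally the lowest weight piece, and the class of an $i$-dimensional cycle, being of Hodge--Tate type $(-i,-i)$, indeed lands there. Since morphisms of mixed Hodge structures are strict, $\gr^W_{-2i}$ is exact; applying it to the homology long exact sequence yields
\[ \gr^W_{-2i}H^G_{2i+1}(U) \to W_{-2i}H^G_{2i}(O) \mapright{f'} W_{-2i}H^G_{2i}(X) \mapright{g'} W_{-2i}H^G_{2i}(U) \to \gr^W_{-2i}H^G_{2i-1}(O). \]
The right-hand term vanishes by the weight bound, giving right-exactness that matches the Chow sequence; the crux is to show the left-hand term $\gr^W_{-2i}H^G_{2i+1}(U)$ also vanishes, upgrading this to a short exact sequence.

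That vanishing I would obtain from a parity statement proved by the same induction: for any $G$-variety with finitely many orbits, $\gr^W_k H^G_j = 0$ whenever $k+j$ is odd. It holds on a single orbit because $H^H_*(\pt;\QQ)$ is even and pure, and it propagates across the localization sequence precisely because $\gr^W_k$ is exact; in particular $\gr^W_{-2i}H^G_{2i+1}(U)=0$, so the bottom row becomes short exact with $f'$ injective. With the outer verticals $a,c$ isomorphisms by induction and the top row right exact, an asymmetric four-lemma finishes the argument: surjectivity of $b$ is the standard chase, while for injectivity one notes that $b(x)=0$ forces $g(x)=0$, hence $x=f(y)$, whence $f'(a(y))=0$, so injectivity of $f'$ and bijectivity of $a$ give $y=0$ and thus $x=0$.

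I expect the main obstacle to be exactly this reconciliation of the two sequences: the equivariant Chow sequence is only right exact while the homology side is a full long exact sequence, so a naive five-lemma is unavailable. The parity vanishing $\gr^W_{\mathrm{even}}H^G_{\mathrm{odd}}=0$ is what repairs this, making the lowest-weight homology sequence short exact and supplying precisely the left-exactness that the Chow side lacks. A secondary point needing care is checking that the Edidin--Graham mixing realizes the equivariant localization sequence inside mixed Hodge structures with the stated weight bound, so that $\gr^W_{-2i}$ may legitimately be applied termwise.
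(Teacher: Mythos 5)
Your argument is correct, and it follows the same broad strategy as the paper --- induction on the number of orbits via compatible localization sequences in equivariant Chow groups and equivariant Borel--Moore homology, with a four-lemma applied to the lowest weight piece --- but the implementation differs in a way worth noting. The paper peels off an \emph{open} orbit $U$ and applies the induction hypothesis to the closed complement $Z$; you peel off a \emph{closed} orbit $O$ and induct on the open complement $U$. The consequence is that the troublesome term $\gr^W_{-2i}H^G_{2i+1}(U)$ always sits over the open stratum: in the paper's arrangement that stratum is a single homogeneous space $G/K$, where the needed input (the ``strong property,'' i.e.\ surjectivity of $\bar H^{G}_{2i+1}(U;\QQ(i))$ onto this graded piece) is essentially free because the target vanishes outright by Totaro's computation of $H^G_*(G/K;\QQ)$; in your arrangement the open stratum has many orbits, so you are forced to prove the parity vanishing $\gr^W_kH^G_j=0$ for $k+j$ odd for an arbitrary variety with finitely many orbits, via an auxiliary induction using exactness of $\gr^W_k$ on sequences of mixed Hodge structures. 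Both routes are sound: the paper's choice makes the extra input needed only for homogeneous spaces and avoids the auxiliary lemma, while yours isolates a clean parity statement of independent interest and never needs to invoke motivic (higher Chow) groups in odd degrees. Two small points of care in your write-up: the identification $A^*_H\otimes\QQ\cong H^*(BH;\QQ)\cong \mathrm{Sym}(\hat T)^W_\QQ$ in the base case requires the usual adjustment (invariants under the component group) when the stabilizer is disconnected, exactly as in the reference the paper cites; and your four-lemma correctly uses only right-exactness of the Chow sequence together with injectivity of $f'$, which your parity lemma supplies, so the asymmetry between the two rows is genuinely resolved.
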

This is established by mimicking B. Totaro's arguments from \cite{To}. Combined with Lemma \ref{tatecor}, it yields the equivariant analogue of Theorem \ref{algcycles}.
\begin{cor}\label{maincor}Let $G$ be a linear algebraic group acting on a variety $X$. Assume $X$ admits finitely many orbits, and that $H^{G}_*(X;\QQ)$ is pure. Then the cycle class map
\[ A^G_*(X)_{\QQ} \mapright{\sim} H_{*}^{G}(X; \QQ) \]
is a degree doubling isomorphism.
\end{cor}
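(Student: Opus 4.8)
The plan is to deduce the corollary formally from Theorem \ref{lowest}, the purity hypothesis, and the Tate-type structure recorded in Lemma \ref{tatecor}. The essential point is that Theorem \ref{lowest} already produces an isomorphism $A^G_i(X)_\QQ \mapright{\sim} W_{-2i}H^G_{2i}(X;\QQ)$ in each degree, so all that remains is to check two things: that under purity the target $W_{-2i}H^G_{2i}$ is in fact all of $H^G_{2i}$, and that the odd-degree equivariant homology vanishes. Granting both, summing the isomorphisms of Theorem \ref{lowest} over $i$ yields the asserted degree-doubling isomorphism $A^G_*(X)_\QQ \mapright{\sim} H^G_*(X;\QQ)$.

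First I would dispose of the even degrees. By hypothesis $H^G_*(X;\QQ)$ is pure, so $H^G_{2i}$ is a pure Hodge structure of weight $-2i$. Purity means the weight filtration is concentrated: $W_{-2i}H^G_{2i} = H^G_{2i}$ while $W_{-2i-1}H^G_{2i} = 0$. In particular the target of the map in Theorem \ref{lowest} is the entire group $H^G_{2i}$, so the cycle class map $A^G_i(X)_\QQ \to H^G_{2i}(X;\QQ)$ is already an isomorphism for every $i$.

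Next I would show that the odd-degree homology vanishes, which is where Lemma \ref{tatecor} enters. Since $X$ has finitely many orbits, each orbit contributes only Tate-type classes, and an induction over the orbit stratification (using the long exact sequences attached to the closed/open decompositions in Borel-Moore homology) shows that $H^G_*(X;\QQ)$ is mixed Tate. A Tate Hodge structure $\QQ(n)$ has weight $-2n$, hence every pure subquotient of a mixed Tate structure has even weight. Combined with purity, which forces $H^G_i$ to be pure of weight $-i$, this gives $H^G_i = 0$ whenever $i$ is odd, so $H^G_*(X;\QQ)$ is concentrated in even degrees.

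Assembling the pieces, $H^G_*(X;\QQ) = \bigoplus_i H^G_{2i}(X;\QQ)$, each summand is identified with $A^G_i(X)_\QQ$ by the preceding steps, and the total map is the degree-doubling isomorphism claimed. The substantive work is thus carried entirely by Theorem \ref{lowest} and Lemma \ref{tatecor}; the corollary itself is a short formal consequence. The one place that demands care is the interplay of purity with the Tate condition used to kill odd homology, since it is exactly the parity of the weight of $\QQ(n)$ that rules out a nonzero pure Tate structure in odd homological degree.
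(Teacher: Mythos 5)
Your proposal is correct and follows essentially the same route as the paper, whose proof is simply the instruction to combine Theorem \ref{lowest} with Lemma \ref{tatecor}: you use purity to see that $W_{-2i}H^G_{2i}=H^G_{2i}$ and the mixed Tate property to rule out odd-degree classes. You have merely made explicit the details that the paper leaves to the reader.
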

Now Theorem \ref{algcycles} follows via restriction from the equivariant to the non-equivariant context, using a result of M. Brion (Lemma \ref{chowfree}).

\subsection*{Acknowledgments: }
This note is the meandering offspring of a joint project with W. Soergel and M. Wendt. I am grateful to them for our continuing discussions.

\section*{Proofs}
\subsection*{Preliminaries}
%Even though the main results make no reference to them, the proofs utilize higher Chow groups/motivic homology (see Lemma \ref{induction}).
%
Let $X$ be a $G$-variety. Write $\bar H_{ G}^*(X; \QQ(j))$ for the equivariant motivic cohomology of $X$ (with `coefficients' in $\QQ(j))$. Write $\bar H^{G}_*(X; \QQ(j))$ for the equivariant motivic (Borel-Moore) homology of $X$.
%\footnote{
%Morally, $H^{\cM,G}_i(X; \QQ(j)) = \Hom_{DM_G(X)}(\const{X}[i](j), \DD\const{X})$, where $\DD$ denotes Verdier duality, $\const{X}$ is the unit object in $DM_G(X)$, and $DM_G(X)$ is an appropriate equivariant derived category of motivic sheaves (in the style of \cite{CD}). A rigorous construction of this category does not yet exist in the published literature (work in progress \cite{SVW}). Regardless, since here the interest is in $H^{\cM,G}_*(X)$, as opposed to the whole equivariant category, one can make do by using the non-equivariant version of the above formula (see \cite[Proposition 19.18]{MVW}) on the approximation spaces used to define equivariant Chow groups (see \cite{EG}).}
In terms of the higher equivariant Chow groups $A_p^G(X, k)$ of \cite{EG}:
\[ \bar H_i^{G}(X; \QQ(j)) = A_j^G(X, i-2j)\otimes \QQ.\]
In particular,
$\bar H^{ G}_{2i}(X; \QQ(i)) = A^G_i(X)_{\QQ}$.
%If $X$ is smooth and equidimensional, then \cite[\S2.7]{EG}:
%\[ H^{\cM, G}_i(X; \QQ(j)) = H^{2\dim(X)-i}_{\cM, G}(X; \QQ(\dim(X) -j)).\]
%
%For our purposes, the primary utility of motivic Borel-Moore homology/higher Chow groups is in them extending the basic exact sequence of Chow groups. Namely, if $Z\subset X$ is a $G$-stable closed subvariety, and $U=X-Z$ the open complement, then there is a long exact sequence, called the \emph{localization sequence}:%(see \cite[Lemma 4]{EG}):
%\[ \to H^{\cM, G}_{2i+1}(U;\QQ(i)) \to H^{\cM, G}_{2i}(Z;\QQ(i)) \to H^{\cM, G}_{2i}(X;\QQ(i)) \to H_{2i}^{\cM, G}(U;\QQ(i)) \to 0.\]
%
It will be notationally convenient to set
\[ A^i_G = \bar H^{2i}_{ G}(\pt; \QQ(i)).\]
%In other words, $A^*_G$ is the (rational) Chow ring of the classifying stack $BG$.
%
%\subsection{Restriction}
Given a group morphism $H\to G$, there is a restriction map $A^*_G \to A^*_H$.
%Taking $G^0\hookrightarrow G$ to be the inclusion of the identity component, one sees that $G/G^0$ acts on $A^*_{G^0}$, and the restriction map defines an isomorphism:\footnote{
%It is crucial that $\QQ$-coefficients are being used here.}
%\[ A^*_G \mapright{\sim} (A^*_{G^0})^{G/G^0}. \]
%This is a consequence of the corresponding property for the ordinary (i.e., non-equivariant) Chow ring \cite[Example 1.7.6]{Fu}.
%If $G$ is connected, and $U\subset G$ is the unipotent radical of $G$. Then $G/U$ is reductive, and restriction yields an isomorphism \cite[Lemma 2.18]{ToB}):
%\begin{equation}\label{arbconnected} A^*_{G/U} \mapright{\sim} A^*_G.\end{equation}
%This follows from $\AA^1$-homotopy invariance
%
%Combined, the above two facts allow us to reduce questions about arbitrary linear algebraic groups to connected reductive groups.
There are analogous restriction maps for motivic homology.
%Restricting to the identity gives the forgetful map
%$A_*^G(X)_{\QQ} \to CH_{*}(X)_{\QQ}$.
\begin{lemma}\label{chowfree}If $G$ is connected, then restriction induces an isomorphism:
\[ \QQ \otimes_{A^*_G} A^G_*(X)_{\QQ} \mapright{\sim} CH_*(X)_{\QQ}. \]
\end{lemma}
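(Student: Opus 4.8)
The plan is to prove Lemma~\ref{chowfree} by realizing the ordinary Chow groups as a limit (or appropriate quotient) of the equivariant ones, exploiting the standard approximation of the classifying space $BG$ by finite-dimensional algebraic varieties. Recall from \cite{EG} that for a connected linear algebraic group $G$, one chooses a sequence of representations $V_n$ together with open subsets $U_n \subseteq V_n$ on which $G$ acts freely and whose complements have codimension tending to infinity; then $A^G_i(X)_{\QQ}$ is computed as $CH_{i+\dim U_n - \dim G}((X \times U_n)/G)_{\QQ}$ for $n$ large relative to the degree in question. The restriction map to ordinary Chow groups arises geometrically from including a fiber of the bundle $(X\times U_n)/G \to U_n/G$, i.e. from pulling back along a point of $U_n/G$ (equivalently, the section $X \cong X \times \{u\} \hookrightarrow (X\times U_n)/G$ for a point $u$ with trivial stabilizer).

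The first step is to identify the ring $A^*_G$ with $CH^*(BG)_{\QQ} = \lim_n CH^*(U_n/G)_{\QQ}$, and to recall that for connected $G$ this ring is concentrated in even (cohomological) degrees with $A^0_G = \QQ$; the augmentation $A^*_G \to \QQ$ is restriction along $\Spec(\CC) \to BG$. The second step is to observe that $(X\times U_n)/G \to U_n/G$ is a Zariski-locally-trivial (or at least a nice) fiber bundle with fiber $X$, so that one can apply the projective bundle / fibration formula for Chow groups. Concretely, I would use that for such a bundle the pullback along a fiber inclusion is surjective with kernel generated by the image of the positive-degree part of $CH^*(U_n/G)_{\QQ}$ acting via the module structure; passing to the limit over $n$ gives exactly the statement that $\QQ \otimes_{A^*_G} A^G_*(X)_{\QQ} \to CH_*(X)_{\QQ}$ is an isomorphism, where the tensor product kills the action of the augmentation ideal $A^{>0}_G$.

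The key mechanism is therefore a Leray--Hirsch type argument: the total space $(X \times U_n)/G$ has its Chow groups generated as a module over $CH^*(U_n/G)_{\QQ}$ by classes restricting to a basis on the fiber $X$, and the relations among these, modulo the augmentation ideal, reduce precisely to the relations in $CH_*(X)_{\QQ}$. Surjectivity of restriction is the easier half, since any cycle on the fiber extends to the total space. Injectivity after tensoring down requires that the kernel of restriction be exactly $A^{>0}_G \cdot A^G_*(X)_{\QQ}$; this is where connectedness of $G$ enters essentially, ensuring $U_n/G$ is connected with $CH^0 = \QQ$ so that the only fiberwise-trivial classes come from the base.

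The main obstacle I anticipate is controlling the relations, i.e.\ proving that the kernel of restriction is \emph{generated} by $A^{>0}_G$ and not merely contained in the augmentation-divisible part. For a general $X$ with a $G$-action one does not have a clean projective bundle formula, so I would reduce to this by a noetherian induction on the orbit stratification (or an equivariant localization sequence), reducing to the case of homogeneous spaces $G/H$ where the fiber bundle $(G/H \times U_n)/G \cong U_n/H$ is understood directly and the comparison $\QQ \otimes_{A^*_G} A^*_H \cong CH_*(G/H)_{\QQ}$ can be verified by hand. Assembling the stratified pieces compatibly across the tower $U_n$ and checking that the module-theoretic quotient commutes with the relevant colimits is the delicate bookkeeping that makes up the technical heart of the proof.
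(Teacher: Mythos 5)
Your proposal takes a genuinely different route from the paper: the paper's proof is a two-line reduction, citing Brion \cite[Corollary 6.7(i)]{Btorus} for reductive $G$ and handling general connected $G$ by passing to the reductive quotient $G/U$ by the unipotent radical (restriction inducing isomorphisms on $A^*$ of a point and on equivariant motivic homology). You instead attempt a from-scratch proof out of the Edidin--Graham model. The geometric picture you describe (fiber inclusion into $(X\times U_n)/G$, killing the augmentation ideal $A^{>0}_G$) is the correct heuristic, but as an actual proof the outline has gaps that are not mere bookkeeping.

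First, the Leray--Hirsch step. Chow groups do not satisfy a Leray--Hirsch theorem for fibrations with arbitrary (singular, non-proper) fiber $X$, and for non-special $G$ (e.g.\ $PGL_n$, orthogonal groups) the map $(X\times U_n)/G\to U_n/G$ is only \'etale-locally trivial, not Zariski-locally trivial, so even the surjectivity claim ``any cycle on the fiber extends'' is not free. With $\QQ$-coefficients one repairs this by a transfer/Weyl-invariance argument reducing to the maximal torus, which is precisely the content of Brion's proof and is absent from your outline. Relatedly, your proposed base case $\QQ\otimes_{A^*_G}A^*_H\cong CH_*(G/H)_{\QQ}$ is not something one ``verifies by hand'': it is the full strength of the lemma for $X=G/H$ and already requires the torus computation of \cite{Btorus} as input. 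Second, the stratification induction. The lemma carries no finitely-many-orbits hypothesis (nor does Brion's result), so noetherian induction on orbits does not reach the stated generality; and even where it applies, the localization sequences for $CH_*$ and $A^G_*$ are only right-exact and the functor $\QQ\otimes_{A^*_G}(-)$ is only right-exact, so comparing the two sequences yields surjectivity for $X$ from the strata but \emph{not} injectivity --- one needs surjectivity of the comparison on a higher/boundary term, which is exactly why the paper's own induction (Lemma \ref{induction}) is run with the ``strong property'' and the weight filtration rather than with $\QQ\otimes_{A^*_G}(-)$. Citing Brion after the reduction from connected to reductive is the efficient way to close all of these gaps at once.
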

\begin{proof}
If $G$ is reductive, then this is \cite[Corollary 6.7(i)]{Btorus}. In general,
if $U\subset G$ is the unipotent radical, then $G/U$ is reductive, and restriction yields an isomorphism $A^*_{G/U} \mapright{\sim}A^*_G$. Similarly for motivic homology (see \cite[Lemma 2.18]{ToB}).
\end{proof}
%The `connected' assumption above is actually not necessary. This can be seen using transfers \cite[\S2.5]{ToB}. But we do not need this level of generality.
%Lemma \ref{chowfree} should be contrasted with the topological situation. Let
%\[ H^*_G = H^*_G(\pt;\QQ). \]
%Then restriction induces an isomorphism
%$\QQ \otimes_{H^*_G}^L H^{BM,G}_*(X;\QQ)\mapright{\sim} H_*^{BM}(X;\QQ)$.
%and `$\otimes^L$' needs to be interpreted in an appropriate dg-setting (see \cite[\S13]{BL}). 
%In general, the (dg)-derived `$\otimes^L$' cannot be replaced by `$\otimes$'.

There is a natural map
$\bar H^{ G}_i(X; \QQ(j)) \to W_{-2j}H^{G}_i(X; \QQ)$.
See \cite[\S4]{To} for a cogent explanation of this. 
%The resulting map $H^{\cM, G}_i(X; \QQ(j)) \to H^{BM,G}_i(X)$ is compatible with localization sequences, restriction maps, etc. 
The map 
\[ \bar H^{G}_{2i}(X; \QQ(i)) = A^G_i(X)_{\QQ} \to H^{G}_{2i}(X; \QQ) \]
is the \emph{cycle class map}. 
%It yields a degree doubling isomorphism %\cite[Proposition 6]{EG}, 
%\cite[Theorem 2.14]{ToB}:
%\begin{equation}\label{cycleiso} A^*_G \mapright{\sim} H^*_G, \end{equation}
%where $H^*_G$ denotes the $G$-equivariant cohomology ring of a point. That is,
%\[ H^*_G = H^*_G(\pt; \QQ).\]
\subsection*{Weak property}
A $G$-variety $X$ satisfies the \emph{weak property} if the cycle class map
\[ \bar H^{ G}_{2i}(X; \QQ(i)) = A_i^G(X)_{\QQ} \to W_{-2i}H^{G}_{2i}(X;\QQ) \]
is an isomorphism for each $i\in \ZZ$.

\subsection*{Strong property}
A $G$-variety $X$ satisfies the \emph{strong property} if it satisfies the weak property and the map
\[ \bar H^{ G}_{2i+1}(X; \QQ(i)) \to \gr_{-2i}^W H_{2i+1}^{G}(X;\QQ) \]
is surjective for each $i\in \ZZ$. Here $\gr^W_{\bullet}$ denotes the associated graded with respect to the weight filtration $W_{\bullet}$. 

\begin{lemma}\label{base}Let $G$ be a linear algebraic group, and let $K\subset G$ be a closed subgroup. Then $G/K$ satisfies the strong property (as a $G$-variety).
\end{lemma}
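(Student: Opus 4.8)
The plan is to reduce the assertion for $G/K$, regarded as a $G$-variety, to the corresponding assertion for $\pt$, regarded as a $K$-variety, and then to read off the strong property from the weight structure of the equivariant homology of a point.

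First I would exploit induction. Since $G/K = G\times^K\pt$, the change-of-groups isomorphism for equivariant motivic homology and for equivariant Borel--Moore homology furnishes compatible identifications
\[ \bar H^G_*(G/K;\QQ(j)) \cong \bar H^K_*(\pt;\QQ(j)), \qquad H^G_*(G/K;\QQ)\cong H^K_*(\pt;\QQ). \]
Both sides are computed by the same Borel construction --- on the topological side $EG\times_G(G/K)=EG/K$ is a model for $BK$ --- so these isomorphisms intertwine the cycle class maps and the weight filtrations. Consequently $G/K$ satisfies the strong property as a $G$-variety if and only if $\pt$ satisfies it as a $K$-variety, and it is the latter statement that I would prove.

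Next I would pin down the weights of $H^K_*(\pt;\QQ)$, i.e.\ of the Borel--Moore homology of $BK$, using the finite-dimensional approximations $Y_m=U_m/K$ of $BK$ from \cite{EG}, \cite{To}. The key input is that $H^*(BK;\QQ)$ is concentrated in even degrees and pure of Tate type, with $H^{2i}(BK;\QQ)$ pure of weight $2i$. For $K$ connected reductive this is Borel's computation $H^*(BK;\QQ)=H^*(BT;\QQ)^W$, a polynomial ring on degree-two Tate classes; one passes to arbitrary $K$ by killing the unipotent radical (which contributes trivial rational cohomology) and then taking invariants under the finite component group (an exact operation over $\QQ$). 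Applying equivariant Poincaré duality on the smooth $Y_m$ then shows that $H^K_j(\pt;\QQ)$ is pure of weight $-j$, vanishes for $j$ odd, and is concentrated in even homological degrees. With this in hand the strong property for $\pt$ is immediate: the vanishing $H^K_{2i+1}(\pt;\QQ)=0$ makes the surjectivity clause automatic, its target $\gr^W_{-2i}H^K_{2i+1}(\pt;\QQ)$ being zero, while purity gives $W_{-2i}H^K_{2i}(\pt;\QQ)=H^K_{2i}(\pt;\QQ)$, so that the weak property becomes the statement that the cycle class map $A^K_i(\pt)_\QQ\to H^K_{2i}(\pt;\QQ)$ is an isomorphism. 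Dually this is the assertion that the cycle class map $A^*_K\otimes\QQ\to H^*(BK;\QQ)$ from the equivariant Chow ring to the cohomology of $BK$ is an isomorphism; rationally $H^*(BK)$ is generated by Chern classes of the universal torus bundle, so the map is surjective, and injectivity is the theorem of Totaro and Edidin--Graham.

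The main obstacle is the second step: establishing purity and algebraicity of $H^*(BK;\QQ)$, together with the isomorphism of the cycle class map onto it, for an arbitrary --- possibly disconnected and non-reductive --- linear algebraic group $K$. This is precisely where the finite-dimensional approximation technique of \cite{To} and the reduction through the unipotent radical and the finite component group do the real work; once it is in place, everything else is formal.
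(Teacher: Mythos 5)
Your proposal is correct, but it does considerably more work than the paper, which disposes of the lemma in one line by citing \cite[Theorem 2.14]{ToB}: the cycle class map $A^G_*(G/K)_{\QQ}\to H^G_*(G/K;\QQ)$ is a degree doubling isomorphism, whence the odd equivariant homology vanishes (making the surjectivity clause of the strong property vacuous) and the even part is spanned by algebraic classes, hence equals its own $W_{-2i}$ piece. What you have written is essentially a reconstruction of the proof of that cited theorem: the change-of-groups identification $H^G_*(G/K;\QQ)\cong H^K_*(\pt;\QQ)$ via $EG/K\simeq BK$, the purity and evenness of $H^*(BK;\QQ)$ obtained from Borel's computation for connected reductive groups plus passage through the unipotent radical and $\pi_0$-invariants, and the Totaro/Edidin--Graham isomorphism $A^*_K\otimes\QQ\mapright{\sim}H^*(BK;\QQ)$. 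Your closing deductions (odd vanishing kills the extra clause; purity gives $W_{-2i}H^K_{2i}=H^K_{2i}$) are exactly the formal step the paper leaves implicit. So the mathematics is sound and the route is the same in substance; the only practical remark is that the heavy lifting you flag as ``the main obstacle'' is precisely the content of the reference the paper invokes, so in the paper's economy you could replace your second step by that citation and keep only the reduction and the two formal observations at the end.
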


\begin{proof}
The map
$A^G_*(G/K)_{\QQ} \mapright{\sim} H_{*}^G(G/K;\QQ)$
is a degree doubling isomorphism (see \cite[Theorem 2.14]{ToB}).
\end{proof}

\begin{lemma}\label{induction}Let $X$ be a $G$-variety, $Z\subset X$ a $G$-stable closed subvariety, and $U = X - Z$ the open complement. If $U$ satisfies the strong property and $Z$ the weak, then $X$ satisfies the weak property.
\end{lemma}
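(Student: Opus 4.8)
The plan is to compare the localization long exact sequences in motivic and in Borel--Moore homology through the cycle class maps, and then to run a five lemma after passing to weight-graded pieces. Fix $i \in \ZZ$. Bloch's localization sequence for (equivariant) higher Chow groups, with the Tate twist $\QQ(i)$ held fixed, reads
\[ \cdots \to \bar H^G_n(Z; \QQ(i)) \to \bar H^G_n(X; \QQ(i)) \to \bar H^G_n(U; \QQ(i)) \xrightarrow{\partial} \bar H^G_{n-1}(Z; \QQ(i)) \to \cdots, \]
while the corresponding sequence for equivariant Borel--Moore homology
\[ \cdots \to H^G_n(Z; \QQ) \to H^G_n(X; \QQ) \to H^G_n(U; \QQ) \xrightarrow{\partial} H^G_{n-1}(Z; \QQ) \to \cdots \]
is a long exact sequence of mixed Hodge structures. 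The natural maps recalled in the Preliminaries, composed with the projection $W_{-2i}\twoheadrightarrow \gr^W_{-2i}$, assemble into a commutative ladder from the first sequence to the second; here I would invoke the compatibility of these maps with pushforward, restriction, and the connecting homomorphism, as constructed in \cite[\S4]{To}.

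Next I would apply the functor $\gr^W_{-2i}$ to the topological sequence. Because morphisms of mixed Hodge structures are strict for the weight filtration, $\gr^W_{-2i}$ is exact, so the result is again a long exact sequence. Extracting the five consecutive terms around $n = 2i$ yields a commutative diagram with exact rows whose vertical arrows are the map $\bar H^G_{2i+1}(U;\QQ(i)) \to \gr^W_{-2i}H^G_{2i+1}(U)$, then the cycle class maps for $Z$, $X$, $U$ in degree $2i$, and finally $\bar H^G_{2i-1}(Z;\QQ(i)) \to \gr^W_{-2i}H^G_{2i-1}(Z)$.

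I would then feed the hypotheses into the five lemma. The leftmost arrow is surjective: this is exactly the strong property of $U$. The two arrows for $Z$ and $U$ in degree $2i$ are isomorphisms, by the weak property of $Z$ and by (the weak part of) the strong property of $U$; note that $H^G_{2i}$ carries weights $\geq -2i$, so $\gr^W_{-2i}H^G_{2i} = W_{-2i}H^G_{2i}$ and these two arrows coincide with the cycle class maps appearing in the weak property. The rightmost arrow is injective for the trivial reason that its source vanishes: under the dictionary $\bar H^G_{2i-1}(Z;\QQ(i)) = A^G_i(Z,-1)_\QQ$, and higher Chow groups vanish in negative simplicial degree. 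The five lemma then forces the middle arrow $\bar H^G_{2i}(X;\QQ(i)) \to \gr^W_{-2i}H^G_{2i}(X) = W_{-2i}H^G_{2i}(X)$ to be an isomorphism, which is precisely the weak property for $X$.

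The main obstacle is the weight bookkeeping that dictates the shape of the hypotheses. The strong property controls only the $\gr^W_{-2i}$-quotient of $W_{-2i}H^G_{2i+1}(U)$, since the odd-degree term genuinely carries a lower weight $-2i-1$ piece; one therefore cannot run the five lemma with $W_{-2i}$ itself as the target rows. Passing to $\gr^W_{-2i}$ repairs this, simultaneously making the target rows exact and matching the surjectivity hypothesis on the nose, while leaving the even-degree terms unchanged since there $\gr^W_{-2i}$ and $W_{-2i}$ agree. The remaining point requiring care is the commutativity of the ladder --- that the cycle class maps form a morphism of localization sequences --- which I would take from \cite[\S4]{To}.
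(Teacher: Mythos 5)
Your proposal is correct and is essentially the paper's own argument: a morphism from the motivic localization sequence to the weight-truncated Borel--Moore sequence, with the strong property of $U$ giving surjectivity on the left, the weak properties of $Z$ and $U$ giving the flanking isomorphisms, and the vanishing of $\bar H^G_{2i-1}(Z;\QQ(i))$ (equivalently, the paper's terminal $\to 0$) closing the five lemma. The only cosmetic difference is that you apply $\gr^W_{-2i}$ uniformly while the paper writes $W_{-2i}$ on the even-degree terms, which you correctly observe coincide.
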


\begin{proof}
We have a morphism of long exact sequences:
\[\xymatrixcolsep{.75pc}\xymatrixrowsep{1.0pc}\xymatrix{
\bar H^{ G}_{2i+1}(U; \QQ(i)) \ar[r]\ar[d]& \bar H^{G}_{2i}(Z;\QQ(i)) \ar[r]\ar[d]& \bar H^{ G}_{2i}(X; \QQ(i)) \ar[r]\ar[d]& \bar H^{ G}_{2i}(U; \QQ(i))\ar[d]\ar[r]&0 \\
\gr^W_{-2i}H^{G}_{2i+1}(U)\ar[r] & W_{-2i}H^{G}_{2i}(Z)\ar[r] & W_{-2i}H^{G}_{2i}(X) \ar[r]& W_{-2i}H^{G}_{2i}(U)\ar[r]&0
}\]
where `$\QQ$' has been omitted from the notation in the bottom row due to typesetting considerations.
%The top rightmost horizontal map is surjective because it is at the tail end of the localization sequence for Chow groups. The bottom rightmost horizontal map is surjective because $H^{BM,G}_{k}(Y;\QQ)$ has weights $\geq -k$, for all $k$ and $Y$.
%\footnote{
%Borel-Moore homology $H^{BM}_k(Y;\QQ)$ is dual to $H_c^k(Y;\QQ)$ (cohomology with compact support), and $H^k_c(Y;\QQ)$ has weights $\leq k$ \cite[\S8]{D0}.
%}
The first vertical map is surjective (strong property for $U$). The second and fourth vertical maps are isomorphisms by the weak property for $Z$ and $U$ respectively. So the third vertical map must also be an isomorphism.
\end{proof}

\subsection*{Proof of Theorem \ref{lowest}}
Combine Lemma \ref{base} and Lemma \ref{induction}.

\subsection*{Proof of Corollary \ref{maincor}}
Combine Theorem \ref{lowest} with the following observation.
\begin{lemma}\label{tatecor}
Let $X$ be a variety on which a linear algebraic group $G$ acts with finitely many orbits. Then the (Borel-Moore) homology $H_*(X;\QQ)$ is a sucessive extension of Hodge structures of type $(n,n)$.
\end{lemma}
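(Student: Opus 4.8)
The plan is to peel the orbits off one at a time (a d\'evissage), reduce everything to the Borel--Moore homology of a single orbit, and carry out the entire argument inside the category of \emph{mixed Tate} Hodge structures, i.e. those $\QQ$-Hodge structures that are successive extensions of the $\QQ(n)$ (equivalently, of pure structures of type $(n,n)$). The first point to record is that this full subcategory is a Serre subcategory of $\QQ$-mixed Hodge structures: it is abelian and closed under subobjects, quotients, and extensions (strictness of the weight filtration reduces each check to the semisimple category of pure Tate structures). Ordering the finitely many orbits by dimension gives a filtration $\emptyset = X_0 \subset X_1 \subset \cdots \subset X_m = X$ by $G$-stable closed subvarieties with $X_k \setminus X_{k-1} = O_k$ a single orbit, since the boundary of an orbit is a union of orbits of strictly smaller dimension. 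The Borel--Moore homology long exact sequence of the pair $(X_k, X_{k-1})$,
\[ \cdots \to H_i(X_{k-1};\QQ) \to H_i(X_k;\QQ) \to H_i(O_k;\QQ) \to H_{i-1}(X_{k-1};\QQ) \to \cdots, \]
is a long exact sequence of mixed Hodge structures. Inducting on $k$, each $H_i(X_k;\QQ)$ becomes an extension of a subobject of $H_i(O_k;\QQ)$ by a quotient of $H_i(X_{k-1};\QQ)$, so by the Serre property it suffices to treat one orbit.

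So let $O \cong G/K$ be a single orbit, with $K$ the stabilizer of a point. As $O$ is smooth of some dimension $d$, Poincar\'e duality furnishes an isomorphism of mixed Hodge structures $H_i(O;\QQ) \cong H^{2d-i}(O;\QQ)\otimes \QQ(d)$, and the Tate twist preserves the Serre subcategory; thus it is enough to show $H^*(G/K;\QQ)$ is mixed Tate. Splitting $O$ into its finitely many (clopen) $G^0$-orbits reduces us to $G$ connected. I would then exploit the fibration $G/K \to BK \to BG$, the fibre of which identifies $H^*_G(G/K)$ with $H^*(BK)$. Both $H^*(BG;\QQ)$ and $H^*(BK;\QQ)$ are mixed Tate: quotienting by the unipotent radical, whose classifying space is cohomologically trivial (the radical being an iterated $\mathbb{G}_a$), reduces to reductive groups, where these cohomologies are polynomial rings on Chern classes, concentrated in even degrees with $H^{2k}$ pure of type $(k,k)$ (and taking $\pi_0$-invariants in the disconnected case preserves this). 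Since $G$ is connected, $BG$ is simply connected, so the Eilenberg--Moore spectral sequence
\[ \mathrm{Tor}^{H^*(BG;\QQ)}_{\bullet,\bullet}\bigl(H^*(BK;\QQ),\, \QQ\bigr) \Longrightarrow H^*(G/K;\QQ) \]
converges; its $E_2$-page is built from mixed Tate modules over a mixed Tate ring, hence lies in the Serre subcategory, and therefore so does the abutment.

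The main obstacle is exactly the compatibility of this spectral sequence with mixed Hodge structures: one must know that its differentials and the extensions assembling the abutment are morphisms of mixed Hodge structures, so that the implication ``$E_2$ mixed Tate $\Rightarrow$ abutment mixed Tate'' is licit. Supplying this functoriality---for instance through a mixed Hodge structure on the bar/cobar construction computing $H^*(G/K)$---is the only genuinely delicate step; everything else is formal bookkeeping in the Serre subcategory. If one wishes to sidestep it, an alternative for the single orbit is to first reduce to $G$ reductive by noting that $G/K \to (G/R_u(G))/\overline{K}$ is a bundle with affine-space fibre, hence a homotopy equivalence inducing an isomorphism of mixed Hodge structures on cohomology, and then to invoke the (classical) mixed Tate-ness of homogeneous spaces of reductive groups. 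A reassuring point throughout is that we need only the Tate-type conclusion, not purity: odd weights are automatically excluded and the exact weights never have to be pinned down, so the weight bookkeeping carries considerable slack.
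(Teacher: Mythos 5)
Your reductions (orbit d\'evissage through the Serre subcategory of mixed Tate structures, Poincar\'e duality on the smooth orbit, passage to $G$ connected) are all sound and match what the paper silently does when it writes ``we may assume $X=G/K$.'' The problem is the step you yourself flag: the Eilenberg--Moore spectral sequence $\mathrm{Tor}_{H^*(BG)}(H^*(BK),\QQ)\Rightarrow H^*(G/K)$ is a purely topological construction, and nothing you say endows its $E_2$-page, its differentials, or the extensions assembling the abutment with mixed Hodge structures. This is not a routine compatibility one can wave at: $BG$ and $BK$ are not varieties, so even the assertion that $H^{2k}(BG;\QQ)$ is pure of type $(k,k)$ already requires Deligne's simplicial-variety (or finite-dimensional approximation) technology, and the Tor-construction takes you one step further away from anything to which the functorial MHS of \cite{D} directly applies. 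Your fallback --- quoting ``classical mixed Tate-ness of homogeneous spaces of reductive groups'' --- is essentially circular, since that statement is the single-orbit case of the lemma and its standard proof is the one you are trying to avoid. So as written the argument has a genuine unfilled gap at its central step.

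The paper's proof shows how to dissolve the difficulty rather than confront it: instead of the fibration $G/K\to BK\to BG$, it identifies $H^*(G/K;\QQ)$ with the $K$-equivariant cohomology of $G$ (the action of $K$ on $G$ being free) and computes the latter by the bar simplicial variety $[p]\mapsto K^{\times p}\times G$. The spectral sequence of the skeletal filtration of a simplicial variety is a spectral sequence of mixed Hodge structures by construction \cite[Proposition 8.3.5]{D} --- no extra compatibility needs to be proved --- and its $E_1$-terms are $H^q(K^{\times p}\times G;\QQ)$, cohomology groups of honest linear algebraic groups, which are of type $(n,n)$ by \cite[\S 9.1]{D}. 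The abutment is then a successive extension of subquotients of mixed Tate structures and the Serre-subcategory bookkeeping you already set up finishes the argument. If you want to salvage your write-up with minimal change, replace the Eilenberg--Moore step by this descent spectral sequence; everything else you wrote can stay.
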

\begin{proof}
We may assume $X=G/K$, where $K\subset G$ is a closed subgroup. Now $H^*(G/K;\QQ)$ is the $K$-equivariant cohomology of $G$. Consider the usual simplicial variety computing this (see \cite[\S6]{D}). Filtering by skeleta yields a spectral sequence whose $E_1$ entries are of the form $H^q(K^{\times p} \times G; \QQ)$ \cite[Proposition 8.3.5]{D}. Now 
%apply the K\"unneth formula and 
recall that the cohomology of a linear algebraic group is 
%a successive extension of Hodge structures of 
of type $(n,n)$ \cite[\S9.1]{D}.
\end{proof}

\subsection*{Proof of Theorem \ref{algcycles}}
We may assume that $G$ is connected. 
Let $H^*_G$ denote the equivariant cohomology ring of a point.
Purity and Lemma \ref{tatecor} imply that $H_*(X;\QQ)$ is concentrated in even degrees. Purity also implies that the natural map
\[ \QQ \otimes_{H^*_G} H_*^{G}(X;\QQ) \mapright{\sim} H_*(X;\QQ) \]
is an isomorphism. 
%The corresponding map on Chow groups
%\[ \QQ\otimes_{A^*_G} A_*^G(X)_{\QQ} \mapright{\sim} CH_*(X)_{\QQ} \]
%is also an isomorphism \eqref{chowfree}.
Further, the cycle class map
$A_*^G(X)_{\QQ} \mapright{\sim} H_{*}^{G}(X;\QQ)$
is an isomorphism by Corollary \ref{maincor}, since purity of $H_*(X;\QQ)$ implies purity of $H^{G}_*(X;\QQ)$.
Thus, combined with Lemma \ref{chowfree}, we obtain a commutative diagram:
\[\xymatrixcolsep{1pc}\xymatrixrowsep{1.5pc}\xymatrix{
A_*^G(X)_{\QQ} \ar[d]_{\sim}\ar[r] & \QQ\otimes_{A_G^*} A_*^G(X)_{\QQ} \ar[d]\ar[r]^-{\sim} & CH_*(X)_{\QQ} \ar[d] \\
H_{*}^{G}(X;\QQ) \ar[r] &  \QQ\otimes_{H_G^{*}} H_{*}^{G}(X;\QQ) \ar[r]^-{\sim}& H_{*}(X;\QQ) 
}\]
Consequently, $CH_*(X)_{\QQ}\mapright{\sim} H_{*}(X;\QQ)$ is an isomorphism.

\end{document}